\documentclass[12pt]{article}

\usepackage{graphicx}  
 \usepackage{graphicx, amsthm, amsfonts, amsmath}
\usepackage{url}
\usepackage{color}
\usepackage{tikz}
\usepackage{verbatim}


\newtheorem{thm}{Theorem}[section]
\newtheorem{lem}[thm]{Lemma}
\newtheorem{prop}[thm]{Proposition}
\newtheorem{cor}[thm]{Corollary}

\newtheorem{dfn}{Definition}[section]
\def\p{\mathbb P}
\def\e{\mathbb E}
\def\lr{\left(}
\def\rr{\right)}

\title{Covering Arrays for Equivalence Classes of Words}
\author{Joshua Cassels and Anant Godbole\\ East Tennessee State University\\
casselsj@etsu.edu; godbolea@etsu.edu}
\begin{document}
\maketitle
\begin{abstract} Covering arrays for words of length $t$ over a $d$ letter alphabet are $k \times n$ arrays with entries from the alphabet so that for each choice of $t$ columns, each of the $d^t$ $t$-letter words appears at least once among the rows of the selected columns.  We study two schemes in which all words are not considered to be different. In the first case words are equivalent if they induce the same partition of a $t$ element set. In the second case, words of the same weight are equivalent.   In both cases we produce logarithmic upper bounds on the minimum size $k=k(n)$ of a covering array.  Definitive results for $t=2,3,4$, as well as general results, are provided.
\end{abstract}
\section{Introduction}  Covering arrays for words of length $t$ over a $d$ letter alphabet are $k \times n$ arrays with entries from the alphabet so that for each choice of $t$ columns, each of the $d^t$ $t$-letter words appears at least once among the rows of the selected columns.  A definitive survey of the field is provided in \cite{c}.  A central question in the area is the following:  given $n,t,$ and $d$ what is the minimum number $k_0=k_0(n,t,d)$ of rows so that a $k\times n$ covering array exists?  In papers such as \cite{s}, \cite{gss}, the focus was on asymptotics, i.e., finding bounds on $k_0(n,t,d)$ as $n\to\infty$ with $t,d$ being held fixed.  For example, the doctoral thesis of Roux, cited in \cite{s}, exhibited the fact that for $d=2, t=3$,
\[k_0(n,3,2)\le7.56\lg n(1+o(1)),\]
where $\lg$ denotes $\log_2$.  In \cite{gss}, the authors used the Lov\'asz local lemma \cite{as} (denoted throughout this paper  by $L^3$) to yield the general upper bound
\[k_0(n,t,d)\le(t-1)\frac{\lg n}{\lg\lr\frac{d^t}{d^t-1}\rr}(1+o(1)),\]
which only yields the bound $10.33\lg n$ for $t=3, d=2$.  (Here and in much of the sequel, we will not include the $1+o(1)$ factors when stating bounds.)  Borrowing Roux's technique of randomly assigning an equal number of ones and zeros to the $n$ columns, the authors of \cite{gss} were then able to match the bound $7.56\lg n$, also via $L^3$. 

There have been several efforts to improve the bounds from \cite{gss} for general values of the parameters.  In \cite{dg}, a technique was used that was intermediate between (i) a straightforward use of the $L^3$ with $nk$ independent uniform random variables determining the array; and (ii) $L^3$ in conjunction with equal weight columns.  Specifically, in \cite{dg}, columns were tiled with small segments that had equal numbers of each letter of the alphabet.  In \cite{gky}, an effort was made to stick with equal weight columns and conquer the more complicated sums that arose for values of the parameters other than $t=3, d=2$.  The algorithmic use of the $L^3$, via a method called {\em entropy compression}, was adopted in the paper \cite{fs}.  Almost at the same time, the authors of \cite{sc}  used alteration to give an improvement of an elementary bound (that uses linearity of expectation) that led to a two-stage construction algorithm. Bounds from the $L^3$ were improved
upon in a different manner in \cite{sc}, by examining group actions on the set of symbols. 

There have been several variations on the basic definition of covering arrays.  In \cite{c2}, and \cite{dgkl}, the authors considered the notion of covering arrays of permutations.  In \cite{cg} and \cite{h}, {\it partial covering arrays} are related to an Erd\H os-Ko-Rado property.  Partial covering arrays are studied exhaustively and extensively in 
\cite{sc2}.  In the statistically relevant paper \cite{gkm}, only consecutive sets of $t$ columns are considered.  The paper \cite{rms} is just one of many in which variable strength covering arrays (where the interactions
to be covered in the array modeled as facets of an abstract simplicial complex); covering arrays on graphs; and mixed covering arrays (different alphabet sets in different columns) are studied.  See also the contributed talks in the sessions on Generalizations of Covering Arrays at 

\centerline{\url{https://canadam.math.ca/2011/program/schedule_contributed_mini}.}

In this paper, we offer two more variations on the definition of covering arrays, and find upper bounds on the size of these arrays using some of the techniques mentioned above.  In particular, the $L^3$, either with or without fixed weight columns, will continue to be used in this paper, together with techniques from \cite{dg} and \cite{gky}.  It would be interesting to see what improvements can be made using entropy compression, or group actions, etc.  In both of our schemes, all words are not considered to be different. In the first case words are equivalent if they induce the same partition of a $t$ element set. In the second case, words of the same weight are equivalent.   In both cases we produce logarithmic upper bounds on the minimum size $k=k(n,t,d )$ of a covering array as $n\to\infty$.  Most definitive results are for $t=2,3,4$.

\section{Covering Arrays for Set Partitions}  This section will focus on covering arrays for set partitions.  The basic definition is as follows, where $B(t)$ denotes the unordered Bell numbers, namely the number of partitions of a $t$-element set into an arbitrary number of parts.
\begin{dfn} An $k\times n$ array with entries from the alphabet $\{1,2,\ldots,d\}$ is a covering array for partitions of a set into $t$ or fewer parts if for each choice of $t$ columns each of the $B(t)$ partitions of $[t]$ appears as a word (or word pattern) across the rows of the selected columns.
\end{dfn}
Given $n,t,$ and $d$ what is the minimum number $k_0=k_0(n,t,d)$ of rows so that a $k\times n$ covering array exists for set partitions?  This is the key question that we will address in this section.  For small values of the parameters, it is possible to ascertain the exact answer; for example the following construction shows that if $n=4$, five rows are all we need to ``shatter" all the five partitions of a 3-element set, so that $k_0(4,3,4)=5$.

\bigskip

 \centerline{Table 1}
 \centerline{\it $k_0(4,3,4)=5$}
 \medskip
$$\vbox{\halign{
\hfil#\hfil&\qquad
\hfil#\hfil&\qquad
\hfil#\hfil&\qquad
\hfil#\hfil&\qquad
\hfil#\hfil\cr
A& B& C& D\cr
1& 1& 1& 1\cr
1& 2& 3& 4\cr
1& 2& 1& 2\cr
2& 2& 1& 1\cr
1& 2& 2& 1\cr
}}$$
 
On the other hand, for $n=5$, we see below that 7 rows suffice to ``shatter" all five partitions of a 3-element set, so that $k_0(5,3,5)\le7$.

\bigskip  

 \centerline{Table 2}
 \centerline{\it $k_0(5,3,5)\le7$}
 \medskip
$$\vbox{\halign{
\hfil#\hfil&\qquad
\hfil#\hfil&\qquad
\hfil#\hfil&\qquad
\hfil#\hfil&\qquad
\hfil#\hfil\cr
A& B& C& D& E\cr
1& 1& 1& 1& 1\cr
1& 2& 3& 4& 5\cr
1& 2& 2& 1& 2\cr
2& 1& 2& 1& 2\cr
2& 2& 1& 1& 2\cr
2& 2& 1& 1& 1\cr
1& 1& 1& 2& 2\cr
}}$$ 

\bigskip

 \centerline{Table 3}
 \centerline{Verification of Table 2 Entries}
 \medskip
$$\vbox{\halign{
\hfil#\hfil&\qquad
\hfil#\hfil&\qquad
\hfil#\hfil&\qquad
\hfil#\hfil&\qquad
\hfil#\hfil&\qquad
\hfil#\hfil&\qquad
\hfil#\hfil&\qquad
\hfil#\hfil&\qquad
\hfil#\hfil&\qquad
\hfil#\hfil\cr
ABC& ABD& ABE& ACD& ACE& ADE& BCD& BCE& BDE& CDE\cr
123& 123& 123& 123& 123& 123& 123& 123& 123& 123\cr
$1\vert2\vert3$& $1\vert2\vert3$& $1\vert2\vert3$& $1\vert2\vert3$& $1\vert2\vert3$& $1\vert2\vert3$& $1\vert2\vert3$& $1\vert2\vert3$& $1\vert2\vert3$&$1\vert2\vert3$\cr
$1\vert23$&$2\vert13$&$1\vert23$&$2\vert13$&$1\vert23$&$3\vert12$&$3\vert12$&{\color{red}123}& $2\vert13$&$2\vert13$\cr
$2\vert13$& $1\vert23$& $2\vert13$& {\color{red}123}& {\color{red}123}&$2\vert13$&$2\vert13$&$1\vert23$&$3\vert12$&{\color{red}$2\vert13$}\cr
$3\vert12$& $3\vert12$& {\color{red}123} &$3\vert12$ & $2\vert13$&{\color{red}$2\vert13$}&$1\vert23$&$2\vert13$&{\color{red}$2\vert13$}&$3\vert12$\cr
**& **& $3\vert12$& $1\vert23$& {\color{red}$1\vert23$}&$1\vert23$&**& {\color{red}$1\vert23$}&$1\vert23$&{\color{red}123}\cr
**& **& **& **& $3\vert12$&**&**&$3\vert12$&**&$1\vert23$\cr
}}$$

As before, however, we will often seek bounds on $k_0(n,t,d)$ as $d,t$ are fixed, but $n\to\infty$; at times we allow $d\to\infty$ as well.  The first proposition (among other results) illustrates the role that $d$ plays; in particular $d$ may be (far) larger than the size $t$ of the set we are trying to partition.
\begin{prop}
$k_0(n,2,n)=2$.
\end{prop}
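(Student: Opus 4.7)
The plan is to prove the statement in two parts: a lower bound $k_0(n,2,n) \ge 2$ and a matching upper bound $k_0(n,2,n) \le 2$, by exhibiting an explicit construction for the upper bound.

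For the lower bound, I would observe that when $t=2$ the relevant partitions of $[2]$ are precisely two: the one-block partition $\{1,2\}$ (whose word pattern is $aa$, i.e.\ the two entries agree) and the two-block partition $\{1\}|\{2\}$ (pattern $ab$ with $a\ne b$). Any single row, restricted to a fixed pair of columns, produces exactly one of these two patterns, so one row alone cannot cover both partitions on that pair. Hence $k_0(n,2,n) \ge 2$.

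For the upper bound, I would exploit the fact that $d = n$, which is what makes this proposition nontrivial: the alphabet is large enough to fill a single row with $n$ distinct symbols. Specifically, take the $2\times n$ array whose first row is the all-ones row $(1,1,\ldots,1)$ and whose second row is $(1,2,3,\ldots,n)$. For any choice of two columns $i<j$, the first row restricts to $(1,1)$, which realizes the partition $\{1,2\}$; and the second row restricts to $(i,j)$ with $i \ne j$, which realizes $\{1\}|\{2\}$. Both partitions of $[2]$ thus appear across the selected columns, confirming that two rows suffice.

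There is really no obstacle here: the main ``content'' of the proposition is just the observation that $d \ge n$ permits a row of all-distinct symbols, which together with a constant row knocks out both patterns simultaneously. The statement is included to highlight the somewhat unusual regime in which $d$ is allowed to grow with $n$, setting the stage for the more substantive bounds later in the section.
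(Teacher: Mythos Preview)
Your proposal is correct and matches the paper's own proof: the paper gives exactly the same two-row construction (the all-ones row and the row $1,2,\ldots,n$) and observes it covers both partitions of $[2]$. You additionally spell out the trivial lower bound $k_0(n,2,n)\ge 2$, which the paper leaves implicit.
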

\begin{proof}
The two rows consist of $123\ldots n$ and $111\ldots 1$; each set of 2 columns contain both 11 and $ij$ ($i<j$), which represent partitioning the 2 elements of the set into the same or different sets.
\end{proof}
We might ask that the size of the underlying alphabet be the same as that of the number of parts into which the $t$-element set is to be partitioned.  The first probabilistic method we use towards bounding $k_0$ in this case is the Lov\'asz local lemma, $L^3$: Let $X=\sum_{j\in J}I_j$ be a sum of indicator random variables for some events in some probability space.  Then $\{X=0\}$ iff none of these events occur, and $\p(X=0)>0$ if it is possible for none of the events to occur.  
\begin{lem} $L^3$:  With $X$ as above, let $\p(I_j=1)\le p\ \forall j\in J$, and assume that each $I_j$ is independent of all $I_i$ except those in an exceptional set of cardinality $\delta$.  Then 
\[ep(\delta+1)\le1\Rightarrow\p(X=0)>0.\]
\end{lem}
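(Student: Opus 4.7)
The plan is to establish the symmetric Lov\'asz Local Lemma via the classical inductive argument. Writing $A_j = \{I_j = 1\}$ for the ``bad'' events and letting $D_j \subseteq J \setminus \{j\}$ denote the exceptional dependency set (so $|D_j| \le \delta$ and $A_j$ is jointly independent of $\{A_i : i \notin D_j \cup \{j\}\}$), the goal is to show
\[\p\lr\bigcap_{j \in J} \overline{A_j}\rr > 0.\]
Expanding via the chain rule gives $\prod_j \p(\overline{A_j} \mid \bigcap_{i < j} \overline{A_i})$, so it suffices to bound each factor strictly away from $1$. I would therefore target the following strengthened inductive claim: for every $j \in J$ and every finite $S \subseteq J \setminus \{j\}$,
\[\p\lr A_j \,\middle|\, \bigcap_{i \in S} \overline{A_i}\rr \le \frac{1}{\delta+1}.\]

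The induction proceeds on $|S|$. The base case $S = \emptyset$ is immediate, since $\p(A_j) \le p \le 1/(e(\delta+1)) \le 1/(\delta+1)$ by hypothesis. For the inductive step I would partition $S$ as $S_1 \sqcup S_2$, where $S_1 = S \cap D_j$ and $S_2 = S \setminus D_j$, exploiting the fact that $A_j$ is independent of the events indexed by $S_2$. A single application of Bayes' rule then rewrites the conditional probability of interest as the ratio of $\p(A_j \cap \bigcap_{i \in S_1} \overline{A_i} \mid \bigcap_{i \in S_2} \overline{A_i})$ over $\p(\bigcap_{i \in S_1} \overline{A_i} \mid \bigcap_{i \in S_2} \overline{A_i})$. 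The numerator is controlled by independence: it is at most $\p(A_j \mid \bigcap_{S_2} \overline{A_i}) = \p(A_j) \le p$. The denominator is opened up by a second use of the chain rule into at most $|S_1| \le \delta$ factors of the form $\p(\overline{A_{i_l}} \mid \cdots)$, each with conditioning set lying strictly inside $S$ and in particular of size at most $|S| - 1$, so that the inductive hypothesis is available for each factor and gives a lower bound of $1 - 1/(\delta+1)$.

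Assembling these bounds, the denominator is at least $(1 - 1/(\delta+1))^{\delta} \ge 1/e$, which makes the conditional probability of interest at most $ep$; the hypothesis $ep(\delta+1) \le 1$ then forces this to be at most $1/(\delta+1)$, closing the induction. Taking $S = \{1,\ldots,j-1\}$ in the resulting bound and plugging back into the chain-rule expansion gives $\p(\bigcap_j \overline{A_j}) \ge (1 - 1/(\delta+1))^{|J|} > 0$, as desired. The main obstacle, and really the only delicate point in the plan, is the bookkeeping in the inductive step: one must check that when the denominator is unwound via the chain rule, the conditioning sets really do remain inside $S$ with the current event removed, so that the inductive hypothesis is legitimately available at each stage. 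The elementary inequality $(1 - 1/(\delta+1))^{\delta} \ge 1/e$ is then precisely what explains the appearance of $e$ in the threshold $ep(\delta+1) \le 1$.
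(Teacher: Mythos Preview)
Your inductive argument is the standard correct proof of the symmetric Lov\'asz Local Lemma, and the bookkeeping you flag (that the conditioning sets in the chain-rule expansion of the denominator are proper subsets of $S$, so the inductive hypothesis applies) does go through. Note, however, that the paper does not supply a proof of this lemma at all: it is stated as a known tool with a citation to Alon--Spencer, and the paper immediately proceeds to apply it. So there is nothing to compare against; you have simply filled in a proof that the paper deliberately omits, and your argument is essentially the one found in the cited reference.
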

\begin{thm} $k_0(n, 2, 2)\le\lg n(1+o(1))$.
\end{thm}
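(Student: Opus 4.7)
The plan is to apply $L^3$ (as stated in the preceding lemma) to a uniform random binary array. Let $M$ be a $k\times n$ matrix whose $kn$ entries are chosen independently and uniformly from $\{0,1\}$. There are only two partitions of $[2]$: the single-block partition $\{\{1,2\}\}$ and the two-block partition $\{\{1\},\{2\}\}$. For each pair of columns $\{i,j\}$ with $1\le i<j\le n$, I would define two bad events: $A^{=}_{ij}$, the event that no row $r$ has $M_{r,i}=M_{r,j}$ (so the single-block partition is never witnessed on that pair); and $A^{\ne}_{ij}$, the event that no row $r$ has $M_{r,i}\ne M_{r,j}$. Then $M$ is a covering array for set partitions precisely when none of the $2\binom{n}{2}$ bad events occurs.

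Next I would bound the per-event probability. Within any fixed row, the chosen pair of columns agrees with probability $1/2$, and rows are independent, so $\p(A^{=}_{ij})=\p(A^{\ne}_{ij})=2^{-k}$. Take $p=2^{-k}$. For the dependency graph: each $A^{*}_{ij}$ is determined by the entries in columns $i$ and $j$ alone, hence is mutually independent of every $A^{*}_{kl}$ for which $\{k,l\}\cap\{i,j\}=\emptyset$. The number of column pairs meeting $\{i,j\}$ is $2(n-2)+1$, and each contributes $2$ bad events, so the dependency degree satisfies $\delta\le 4n-7$.

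Plugging into $L^3$, the sufficient condition $ep(\delta+1)\le 1$ becomes $e\cdot 2^{-k}(4n-6)\le 1$, which holds as soon as $k\ge \lg n + \lg(4e) + o(1)$. For any such $k$ a valid covering array exists with positive probability, and we conclude $k_0(n,2,2)\le\lg n+O(1)=\lg n\,(1+o(1))$.

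The proof is essentially a bookkeeping exercise, and I do not expect any genuine obstacle. The only points that need care are (i) the translation of the two partitions of $[2]$ into exactly two bad events per column pair, namely ``all rows agree'' and ``all rows disagree,'' and (ii) the observation that both events have the identical probability $2^{-k}$ in the binary, uniform setting. Notably, unlike the $k_0(n,3,2)$ situation considered by Roux and \cite{gss}, no refinement such as fixed equal-weight columns is required to achieve the leading constant $1$, because there are only two equivalence classes here, each of single-row probability $1/2$.
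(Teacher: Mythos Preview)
Your proof is correct and follows essentially the same $L^3$ argument as the paper: uniform random binary entries, per-pair bad events of probability $2^{-k}$, and dependency degree linear in $n$. The only cosmetic difference is that the paper first lays down a deterministic row of all ones to cover the single-block partition (so only one bad event per pair remains), whereas you handle both partitions probabilistically with two bad events per pair; the paper itself remarks that this extra row ``does not lead to an asymptotically better answer,'' and indeed both routes yield $k_0(n,2,2)\le\lg n+O(1)$.
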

\begin{proof} We start with a row of ones, even though this step does not lead to an asymptotically better answer.  We fill each entry in the $k\times n$ array below this initial row independently with the outcomes of $kn$ Bernoulli random variables, each equalling 1 with probability 1/2.  Let $X$ be the number of pairs of columns that are missing both the entries 10 and 01 in their rows.  Then $X=\sum_{j=1}^{n\choose 2}I_j$, where $I_j=1$ if the $j$th pair of columns is missing both 01 and 10 ($I_j=0$ otherwise).   We have $\p(I_j=1)=(1/2)^k:=p$ and $I_j$ is dependent on all pairs of columns that intersect the $j$th pair, a number that may be bounded by $2n$.  Thus $\p(X=0)>0$ if $2e(1/2)^kn\le 1$, or, if $k\ge \lg n(1+o(1))$.  It follows, on adding the first row, that if one has a random array following a single row with all ones, it is possible for there to be no pair of columns missing both 01 or 10, and thus a partition of $\{0,1\}$ into different parts.  Since a partition into the same parts is taken care of by the first row, we have that
\[k_0(n,2,2)\le 1+\lg n(1+o(1))=\lg n(1+o(1)),\]
as asserted.
\end{proof}
\noindent {\it Remark 1:}  If we seek to improve this bound (for even $k$) by placing an equal number of zeros and ones in each column, we can verify that $p=\frac{{{k}\choose{k/2}}\cdot 1}{{{k}\choose {k/2}}^2},$ which is asymptotic, via Stirling's approximation, to  $A\frac{\sqrt k}{2^k}$, and, since the dependence number $\delta$ is still the same, we see that the bound on $k_0$ is actually worse than that given by Theorem 2.3.  To see that $p$ is indeed as indicated, we recognize the denominator as being the total number of ways to position the zeros and ones in the two columns.  For the numerator term, on the other hand, for any choice of ${k\choose{k/2}}$ ways of filling the first column, there is precisely one way to fill the entries of the second column.  Thus the ploy of using equally weighted columns does not always work; we shall see other examples of this phenomenon later.

\medskip

\noindent{\it Remark 2:}  Another possibility by which one might improve Theorem 2.3 is by increasing the alphabet size; we can, for example, let $d=3$.  In this case, there are six equivalent partitions of $\{1,2\}$ into two parts, namely via the configurations 12, 21, 13, 31, 23, and 32.  It follows that none of these configurations are present with probability $(3/9)^k$ and the $L^3$ condition holds if 
\[2en(1/3)^k\le 1,\] which yields
\[k_0(n, 2, 3)\le \frac{\lg n}{\lg 3}(1+o(1)),\]
and an extension of the same technique gives
\[k_0(n, 2, d)\le \frac{\lg n}{\lg d}(1+o(1)).\]

\medskip

\noindent {\it Remark 3:} The relationship between $t$ and $d$ is thus of some relevance.  Another situation where this situation arises is in the area of {\it Universal Cycles}, which are cyclic orderings of a set of objects ${\cal C}$, each represented
as a string of length $N$. The ordering requires that object $b = b_0b_1 . . . b_{N-1}$ follow object
$a = a_0a_1 . . . a_{N-1}$ only if $a_1a_2 \ldots a_{N-1} = b_0b_1 . . . b_{N-2}$. These were originally introduced in 1992 by Chung, Diaconis, and Graham \cite{cdg} as generalizations of de Bruijn cycles.  As an example, the string
\[ 1356725\ 6823472\ 3578147\ 8245614\ 5712361\ 2467836\ 7134582\ 4681258, \]
 where each block is obtained from the previous one by addition of 5 modulo 8, is an encoding of the $56={8\choose 3}$ 3-subsets of the set $[8]:=\{1,2,3,4,5,6,7,8\}$.  In \cite{cdg}, the authors studied Universal Cycles of subsets of size $k$ of an $n$-element set (as in the above example); set partitions (as in this paper); and permutations.

It was shown that for $n\ge 4$, there {\it does exist} a ucycle of all partitions of the set $[n]$ into an arbitrary number of parts.  For example, we have the ucycle $abcbccccddcdeec$ of the set partitions of $[4]$, where, as in this paper, the substring $dcde$ encodes the partition $13\vert2\vert4$. Note that the alphabet used was, in this case, of size 5, though an alphabet of (minimum) size 5 is shown to suffice to encode the partitions of $[5]$ as 
$$DDDDDCHHHCCDDCCCHCHCSHHSDSSDSSHSDDCH$$$$SSCHSHDHSCHSJCDC.$$
The authors of \cite{cdg} also ask how many partitions of $[n]$ using an alphabet of size $N\ge n$ exist. This question is in the same genre as our query about the $t-d$ relationship.

\begin{thm}
$$k_0(n,3,n)=4.818\lg n(1+o(1));$$
$$k_0(n,3,3)=5.516\lg n(1+o(1));$$
\end{thm}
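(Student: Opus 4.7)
The plan is to combine the $L^3$ argument of Theorem 2.3 with one or two deterministic \emph{seed rows} that handle the partition types which are rare under a uniformly random filling. The key observation is that the constant row $(1,1,\ldots,1)$ induces the all-same partition $123$ across every triple of columns, and when $d\ge n$ the row $(1,2,\ldots,n)$ induces the all-different partition $1\vert 2\vert 3$ across every triple. Once these degenerate types are covered deterministically, only the three ``one-pair-same'' patterns $12\vert 3$, $13\vert 2$, $23\vert 1$ (and, in the $d=3$ case, also $1\vert 2\vert 3$) remain to be hit by random rows, and $L^3$ handles those.

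For $k_0(n,3,n)$ I would prepend both seed rows above and fill the remaining $k-2$ entries i.i.d.\ uniformly from the sub-alphabet $\{1,2\}$, since restricting to a binary alphabet maximizes the per-row probability of hitting a pair-same pattern. Exactly $2$ of the $2^3=8$ binary words realize any given pair-same pattern, so if $p:=\p(\text{that pattern is missing from all random rows})$, then $p=(3/4)^{k-2}$. The $3\binom{n}{3}$ bad events, indexed by (column-triple, pair-same pattern) pairs, each depend only on events whose triple shares a column, giving $\delta=O(n^2)$. The inequality $ep(\delta+1)\le 1$ then forces $k\ge \frac{2\lg n}{\lg(4/3)}(1+o(1))=4.818\lg n\,(1+o(1))$. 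For $k_0(n,3,3)$ the alphabet $\{1,2,3\}$ is too small for a single seed row to induce all-different across every triple (for $n\ge 4$, pigeonhole forces a repeated value in any row), so only the constant seed row is used; filling the remaining $k-1$ rows uniformly from $\{1,2,3\}$, a direct enumeration shows each of the four non-all-same partition types is realized by exactly $6$ of the $27$ ternary words, so each bad event has probability $(7/9)^{k-1}$, and $L^3$ with $\delta=O(n^2)$ yields $k\ge \frac{2\lg n}{\lg(9/7)}(1+o(1))=5.516\lg n\,(1+o(1))$.

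The main conceptual hurdle is deciding which partition types to handle by seed rows and which to leave to random rows, and then choosing the random-row alphabet to maximize the smallest per-pattern probability. In the $d=n$ case, using the full alphabet $\{1,\ldots,n\}$ uniformly would drop each pair-same probability to $\Theta(1/n)$, wrecking the $\lg n$ bound; restricting to a $2$-letter sub-alphabet is what produces the constant $2/\lg(4/3)$. In the $d=3$ case, by symmetry the uniform distribution on $\{1,2,3\}$ already maximizes the minimum over the four non-all-same partition probabilities, so no improvement is available at this level of argument, yielding the constant $2/\lg(9/7)$.
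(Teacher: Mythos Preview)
Your proposal is correct and essentially mirrors the paper's own proof: two seed rows plus i.i.d.\ binary random rows for $d=n$, one constant seed row plus i.i.d.\ ternary random rows for $d=3$, followed by $L^3$ with $\delta=O(n^2)$. The only cosmetic difference is that you index bad events by (triple, pattern) pairs while the paper indexes by triples and absorbs the factor $3$ (resp.\ $4$) into $p$ via a union bound; both routes yield the constants $2/\lg(4/3)=4.818$ and $2/\lg(9/7)=5.516$, and your remarks on why the binary sub-alphabet is optimal for the $d=n$ case recapitulate the paper's Remark~4.
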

\begin{proof}
We begin with the first result.  Start by filling the first two rows with $123\ldots n$ and $111\ldots 1$; this provides, in any set of 3 columns, a partition into a single part, and into three separate parts.  We next use a set of $nk$ Bernoulli coin flips to determine the values of the rest of the array.  Let $X$ be the set of three columns that are missing 110 and 001; or 101 and 010; or 011 and 100.  If $X\ge 1$ there will be a set of three columns that is missing the partition $12\vert 3$; or $13\vert 2$; or $1\vert 23$.  We want to see when $X=0$ and again invoke the Lov\'asz lemma.  Clearly 
\[X=\sum_{j=1}^{n\choose 3}I_j,\]
where $I_j=1$ if the $j$th set of three columns is {\it  deficient} in the above sense.  Thus
\[p=\p(I_j=1)\le 3\p(j\ {\rm is\ missing\ }110\ {\rm and\ }001)\le 3(3/4)^k,\]
and
\[\delta+1 \le 3{n\choose 2}\le \frac{3n^2}{2}\]
so that $\p(X=0)>0$ provided that 
\[\frac{9e}{2}\lr\frac{3}{4}\rr^kn^2<1,\]
or if $$k\ge \frac{2\lg n}{\lg (4/3)}(1+o(1))=4.818\lg n(1+o(1)).$$ 
Adding in the first two rows we get
$$k_0\le 2+4.818\lg n(1+o(1))=4.818\lg n(1+o(1)),$$ as claimed.

For the second part, we use a probabilistic model in which, after a single row of zeros is laid down, each entry is independently chosen to be 0, 1, or 2 with probability 1/3.  For any set of 3 columns, the probability that a partition into three parts is absent is $(21/27)^k$; and the probability that any of the three  partitions into two parts is absent is also $(21/27)^k$.  Thus, any set of three columns is deficient with probability
\[p\le 4\cdot\lr\frac{21}{27}\rr^k,\]
and we have 
\[\delta+1\le \frac{3n^2}{2},\]
which yields, as before
$$k_0(n, 3,3)\le \frac{2\lg n}{\lg (27/21)}(1+o(1))=5.516\lg n(1+o(1)),$$
proving the second part of the result.
\end{proof}

\medskip

\noindent{\it Remark 4:} Once again we see that increasing the alphabet yields some benefits, but in a ``hybrid" kind of way:  In Theorem 2.4, we just used letters $1,2,\ldots, n$ in the very first row, after which the job was completed with the binary digits 0 and 1.  It turns out, however, that using digits $1,2,\ldots, r$ from the second row onwards does not yield dividends.  This is because there are $r(r-1)$ ways to achieve the partition 001 or 110 and thus
\[\p(j\ {\rm is\ missing\ }110\ {\rm and\ }001)\le (1-r(r-1)/r^3)^k=(1-(r-1)/r^2)^k,\]
but we have 
\[\frac{r-1}{r^2}\le\frac{1}{4},\ r\ge 2.\]  

Is it conceivable that $k_0(n,3,d)$ is smaller than $k_0(n,3,3)$ for $d\ge 4$?  We need to merely check if $p$ is lower than $(21/27)^k$ for partitions into 3 or 2 parts.  For a $d$-letter alphabet a partition into 3 parts is absent with probability $([d^3-d(d-1)(d-2)]/d^3)^k$, which is smaller than $(21/27)^k$ for $d\ge 4$. Regarding partitions into two parts, these are each absent with probability $([d^3-d(d-1)]/d^3)^k$, which is {\it not} smaller than $(21/27)^k$ for $d\ge 4$, so the answer to the query is ``no".

\medskip 

\noindent{\it Remark 5:} The use of $L^3$ in Theorem 2.4 gives a 50\% improvement over the first moment method
\[\e(X)<1\Rightarrow\p(X=0)>0,\]
which gives the bounds $7.2\lg n$ and $8.25\lg n$ respectively in Theorem 2.4.  However, do equally weighted columns in conjunction with $L^3$ yield an improvement?  The next result attempts to squeeze out an improvement in the first part of Theorem 2.4, as in the work of \cite{gss} and \cite{s}.

\begin{prop} If the first two rows of the array are filled with $123\ldots n$ and $111\ldots 1$, and we then randomly place an equal number of zeros and ones in each of the $n$ columns, we {\it still} need at least $4.818\lg n(1+o(1))$ rows to guarantee that each partition of [3] appears in each set of three columns.
\end{prop}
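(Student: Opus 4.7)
The plan is to show that the deficiency probability $p$ under the equal-weight random column model still satisfies $p = \Theta((3/4)^{k})$, so that the $L^{3}$ condition $ep(\delta+1)\le 1$ with $\delta+1 \le 3n^{2}/2$ again forces $k \ge \frac{2\lg n}{\lg(4/3)}(1+o(1)) = 4.818\lg n(1+o(1))$. By the same symmetry used in Theorem 2.4 among the three forbidden partitions, the three events ``miss $\{001,110\}$'', ``miss $\{010,101\}$'', and ``miss $\{100,011\}$'' all carry the same probability $p_{1}$, giving $p_{1} \le p \le 3p_{1}$, so it suffices to pin down $p_{1}$.

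For a triple $(A,B,C)$ of columns each with $k/2$ ones, I would parametrise by the number $r$ of rows with $(A,B) = (0,0)$. The marginal weight constraints on $A$ and $B$ force the counts of rows with $(A,B) = (1,1), (0,1), (1,0)$ to be $r,\ k/2-r,\ k/2-r$ respectively. The missing patterns $001$ and $110$ impose $C = A$ on the $2r$ rows where $A = B$, while $C$ is free on the remaining $k-2r$ rows subject to total weight $k/2$ in $C$. Counting first the placement of the four row-types and then the placement of ones of $C$ on the free rows, and collapsing the inner sum by Vandermonde's identity, yields
\[
p_{1} = \binom{k}{k/2}^{-3}\sum_{r=0}^{k/2}\frac{k!}{(r!)^{2}\bigl((k/2-r)!\bigr)^{2}}\binom{k-2r}{k/2-r}.
\]

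The heart of the proof is then an asymptotic evaluation of this sum via Stirling and Laplace's method. Writing $r = \beta k/2$, the logarithm of the summand is $k\phi(\beta) + O(\log k)$ with $\phi(\beta) = (2-\beta)\log 2 + H(\beta)$, where $H$ denotes the binary entropy. Differentiating, $\phi'(\beta^{*}) = 0$ yields the interior maximiser $\beta^{*} = 1/3$ with $\phi(1/3) = \log 6$, so Laplace's method gives $\sum_{r}(\cdot) = \Theta(6^{k}/k^{3/2})$. Since $\binom{k}{k/2}^{3} = \Theta(8^{k}/k^{3/2})$, the polynomial factors cancel and $p_{1} = \Theta((3/4)^{k})$. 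Substituting into the $L^{3}$ inequality reproduces the Theorem 2.4 bound verbatim, both as an upper bound via $p \le 3p_{1}$ and as a matching lower requirement via $p \ge p_{1}$.

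The main obstacle is the Stirling--Laplace bookkeeping at the peak: one must confirm that the $\sqrt{k}$ width of the Gaussian around $\beta^{*} = 1/3$ combines with the $k^{-2}$ polynomial prefactor of the summand to produce $k^{-3/2}$, exactly matching the $k^{-3/2}$ in the normalisation $\binom{k}{k/2}^{-3}$. It is this precise cancellation of polynomial orders that guarantees no improvement, even a subexponential one, is extracted from the equal-weight refinement, which is precisely the content of the proposition.
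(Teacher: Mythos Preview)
Your proof is correct and essentially the same as the paper's. The paper parametrises by $j$, the number of agreeing-one positions between the first two columns (focusing on missing $\{101,010\}$), obtaining $p_{1}=\binom{2m}{m}^{-2}\sum_{j}\binom{m}{j}^{2}\binom{2j}{j}$; your formula, after writing $\frac{k!}{(r!)^{2}((k/2-r)!)^{2}}=\binom{2m}{m}\binom{m}{r}^{2}$ and substituting $j=m-r$, is literally the same sum, and your Laplace peak $\beta^{*}=1/3$ is the paper's $A=2/3$. Your version is a bit more careful in tracking the polynomial prefactors to exhibit the exact cancellation, which is appropriate given that the point of the proposition is the \emph{absence} of any gain.
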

\begin{proof}  We focus on computing $p$, the probability that any set of 3 columns is deficient due to it missing the partition $2\vert 13$.  Letting $k=2m$, fill the first column in ${{2m}\choose{m}}$ ways, assuming without loss that the ones are in the first $m$ places in the first column.   The `top half' of the second column can contain a variable number $j$ of 1’s, and thus $m-j$ 0's in the other places.  Similarly, we fill the bottom half with the
remaining $m-j$ 1’s and the remaining $j$ positions are filled with 0’s.  Since 101 and 010 are equivalent, we must allow for
this in our final column. Note that the $m-j$ places in top half of the second column with 0's and the $m-j$ places in the bottom half with with 1's must have zeros and ones respectively in the third column.  This leaves $j$ places among the remaining $2j$ places in column 3 to be filled by ones in an unrestricted fashion.  Thus our calculation for the number of occurrences where the pattern 101 = 010 is missing from a given set of three columns is
\[{{2m}\choose{m}}\cdot\sum_{j=0}^m{m\choose j}^2{{2j}\choose{j}},\]
so that the probability that this partition is missing is
\[\frac{{{2m}\choose{m}}\cdot\sum_{j=0}^m{m\choose j}^2{{2j}\choose{j}}}{{{2m}\choose{m}}^3}=\frac{\sum_{j=0}^m{m\choose j}^2{{2j}\choose{j}}}{{{2m}\choose{m}}^2}.\]
We will next try to identify the value of $j$ at which the above sum is maximized.  Accordingly, set 
$\pi_j={m\choose j}^2{{2j}\choose {j}},$ parametrize by setting $j=Am$ for $0\le A \le 1$, and employ Stirling's approximation to get that
\begin{eqnarray*}\alpha(A)&:=&{m\choose {Am}}^2{{2Am}\choose {Am}}\nonumber\\
&=&\frac{m!^2(2Am)!}{(Am)!^4(m-Am)!^2}\nonumber\\
&\le&K(m)\lr\frac{m}{e}\rr^{2m}\lr\frac{2Am}{e}\rr^{2Am}\lr\frac{e}{Am}\rr^{4Am}\lr\frac{e}{(1-A)m}\rr^{2m(1-A)}\nonumber\\
&=&K(m)\lr\frac{(2A)^{2A}}{A^{4A}(1-A)^{2(1-A)}}\rr^m\nonumber\\
&=&K(m)(\phi(A))^m,\end{eqnarray*}
where $K(m)$ is a rational function of $m$.  It is routing to calculate that $\phi(A)$ is maximized for $A=2/3$, so that we get 
\[\pi_j\le K(m)(\phi(2/3))^m=9^m,\]
and the required probability is no more than 
\[\frac{9^m(1+o(1))}{{{2m}\choose{m}}^2}=\lr\frac{9}{16}\rr^m(1+o(1)),\]
which gives, on considering the first two rows and the partitions 110=001, and 011=100, and utilizing $L^3$, that we need to have 
$$k=2m=\frac{4\lg n}{\lg(16/9)}=\frac{2\lg n}{\lg(4/3)}(1+o(1))$$
rows, {\it exactly as in the first part of Theorem 2.4.}  It turns out that our strategy does not yield dividends.  
\end{proof}
\begin{thm}$k_0(n,4,n)\le 27.019\lg n(1+o(1))$; $k_0(n,4,4)\le 43.313 \lg n(1+o(1))$.
\end{thm}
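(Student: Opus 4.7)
The plan is to follow the template of Theorem 2.4: pre-load the array with a few deterministic rows that dispose of the partitions hardest to realize at random, fill the remaining entries with i.i.d.\ uniform letters from a carefully chosen alphabet, and close with $L^3$. The $B(4)=15$ partitions of $[4]$ decompose as $1+7+6+1$ by block count (one-, two-, three-, four-block), and each such equivalence class must be covered in every four-column selection.

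For the first bound ($d=n$), I would use the two deterministic rows $1,2,\ldots,n$ and $1,1,\ldots,1$: the former gives a four-block pattern in every 4-column selection (since distinct columns yield distinct values), and the latter gives the one-block partition. I would then fill the remaining $k-2$ rows i.i.d.\ uniformly from the ternary alphabet $\{0,1,2\}$, which is just large enough to realize any partition of $[4]$ into at most three blocks and is preferable to a larger alphabet for reasons mirroring Remark 4 (increasing $d$ hurts the dominant two-block missing probability while only helping the three-block one). A direct enumeration shows that each of the remaining $13$ partitions has exactly $6$ representative ternary strings of length $4$ among the $3^4=81$ possibilities (patterns $abbb$ and $aabb$ both give $3\cdot 2=6$, and the three-block patterns each give $3\cdot 2\cdot 1=6$), so letting $I_S=1$ iff the 4-column set $S$ is missing one of these partitions, $p:=\p(I_S=1)\le 13(75/81)^k$. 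With the standard bound $\delta+1\le 4\binom{n-1}{3}\le \frac{2n^3}{3}$ and the $L^3$ condition $ep(\delta+1)\le 1$, this yields $k\ge 3\lg n/\lg(81/75)\,(1+o(1))=27.019\lg n(1+o(1))$.

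For the second bound ($d=4$), a single deterministic row of ones again handles the one-block partition, and the remaining $k-1$ rows are filled i.i.d.\ uniformly from $\{1,2,3,4\}$. Counting representative strings among the $4^4=256$ quaternary words of length $4$ gives $12$ per two-block partition and $24$ per three-block or four-block partition, so the per-partition missing probabilities are $(244/256)^k=(61/64)^k$ for two-block and $(232/256)^k=(29/32)^k$ for the rest; thus $p\le 7(61/64)^k+7(29/32)^k$, dominated by the two-block term. Feeding the same dependence bound into $L^3$ gives $k\ge 3\lg n/\lg(64/61)\,(1+o(1))=43.313\lg n(1+o(1))$. The only real obstacle in carrying out this plan is the bookkeeping: correctly enumerating representative strings for each of the four structural types of partitions of $[4]$ and verifying in each setting that the worst missing probability is indeed the two-block one I have identified (rather than some class I overlooked). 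No new probabilistic ingredient beyond Theorem 2.4 appears to be required; the design choice is simply to use the smallest alphabet for the random fill that leaves the dominant term in control of the union bound.
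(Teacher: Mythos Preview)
Your proposal is correct and matches the paper's own proof essentially line for line: the same two deterministic rows followed by i.i.d.\ ternary fill for the $d=n$ case, and a single all-ones row followed by i.i.d.\ quaternary fill for the $d=4$ case, with the same enumeration ($6$ representatives per partition over $\{0,1,2\}$; $12$ vs.\ $24$ over $\{1,2,3,4\}$) and the same $L^3$ dependence bound $\delta+1=O(n^3)$. The only cosmetic differences are that the paper bounds $p$ in the second part by $14(244/256)^k$ rather than splitting it as $7(61/64)^k+7(29/32)^k$, and that you index the random portion as $k-2$ (resp.\ $k-1$) rows while writing $(75/81)^k$; neither affects the asymptotics.
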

\begin{proof} To prove the first part, we start with two rows, one consisting of any permutation of $[n]$ and the other consisting of all ones.  There are seven partitions of a 4-element set into two parts and six partitions of a 4-element set into three parts.  We use a random allocation of digits $1,2,3$ to generate these with positive probability via $L^3$.  Each of the abovementioned 13 partitions may be obtained in 6 equivalent ways, so that for any partition $j$,
\[p\le 13\p(j\ {\rm is\ missing})\le 13\lr\frac{75}{81}\rr^k,\] and, denoting by $X$ the number of quadruples of deficient columns, and further noting that $\delta\le 4{n\choose 3}\le\frac{2}{3}n^3$, we see that $\p(X=0)>0$ provided that
\[\frac{26e}{3}n^3\lr\frac{75}{81}\rr^k<1,\]
which simplifies, on adding the first two rows, to
\[k_0(n,4,n)\le\frac{3\lg n}{\lg (81/75)}(1+o(1))=27.019\lg n(1+o(1)).\]

To prove the second part, we start with a single row consisting of all ones.  There are seven partitions of a 4-element set into two parts and six partitions of a 4-element set into three parts.  We use a random allocation of digits $1,2,3,4$ to generate these with positive probability via $L^3$.  The probability that a partition into 4 parts is obtained at random is $4!/4^4=24/256$.  A partition into two (resp. three) parts has chance 12/256 (resp. 24/256) of appearing as the entries of a row.  The 12/256 probability will dominate the asymptotic calculation, and thus
\[p\le 14\p(j\ {\rm is\ missing})\le 14\lr\frac{244}{256}\rr^k,\] and, as in the first part, we get
\[k_0(n,4,4)\le\frac{3\lg n}{\lg (256/244)}(1+o(1))=43.313\lg n(1+o(1)).\]
\end{proof}

The calculation of general upper bounds on $k_0(n,t,n)$ and $k_0(n,t,t)$, via $L^3$, follows a similar path as in Theorems 2.4 and 2.6.  More specifically, we note that $\delta\le An^{t-1}$ and that, for $2\le j\le t$, partitions of a $t$-element set into $2$ parts can be realized in the smallest number of ways.  This yields (formal proof below)
\begin{thm} 
\[k_0(n,t,n)\le\frac{(t-1)\lg n}{\lg(\alpha(t))}(1+o(1)),\]
and
\[k_0(n,t,t)\le\frac{(t-1)\lg n}{\lg(\beta(t))}(1+o(1)),\]
where 
\[\alpha(t)=\frac{(t-1)^{t}}{(t-1)^{t}-(t-1)(t-2)}\]
and
\[\beta(t)=\frac{t^t}{t^t-t(t-1)}.\]
\end{thm}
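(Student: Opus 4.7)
The plan is to imitate the recipe of Theorems 2.4 and 2.6: lay down a small number of deterministic seed rows handling the extremal partitions of $[t]$, fill the remaining rows at random, and invoke $L^3$. For the first inequality, the seed rows will be a permutation $1\,2\,\cdots\,n$ of $[n]$, supplying the all-singletons partition in every $t$-subset of columns, together with an all-ones row, supplying the one-part partition. I then fill the remaining rows independently and uniformly at random from the alphabet $\{1,2,\ldots,t-1\}$; this is enough because every partition of $[t]$ with $j\in\{2,\ldots,t-1\}$ parts requires only $j\le t-1$ distinct letters.

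For a fixed $t$-subset $J$ of columns and a fixed $j$-partition $\pi$ of $[t]$, the number of strings in $\{1,\ldots,t-1\}^t$ realizing $\pi$ is $(t-1)(t-2)\cdots(t-j)$, one distinct letter per block. Over $j\in\{2,\ldots,t-1\}$ this product is minimized at $j=2$, with value $(t-1)(t-2)$, because increasing $j$ only multiplies in additional positive factors. Letting $I_J$ indicate that some non-extremal partition is missing from the random restriction of $J$, this gives
\[\p(I_J=1)\le(B(t)-2)\lr\frac{(t-1)^t-(t-1)(t-2)}{(t-1)^t}\rr^k=(B(t)-2)\alpha(t)^{-k}=:p.\]
The event $I_J$ depends only on $I_{J'}$ with $J\cap J'\ne\emptyset$, and there are at most $t\binom{n-1}{t-1}=O(n^{t-1})$ such $J'$. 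Inserting into $ep(\delta+1)\le1$ and solving for $k$ yields $k\ge\frac{(t-1)\lg n}{\lg\alpha(t)}(1+o(1))$, with the two seed rows as well as the Bell-number and polynomial constants absorbed into the $1+o(1)$ factor.

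The second inequality is handled in parallel, using only the all-ones seed row and filling the remaining rows independently and uniformly from $\{1,\ldots,t\}$. A $j$-partition is now realized by $t(t-1)\cdots(t-j+1)$ strings, and over $j\in\{2,\ldots,t\}$ this is again minimized at $j=2$ with value $t(t-1)$. This produces $p\le(B(t)-1)\beta(t)^{-k}$; the dependency count is identical; and the same $L^3$ computation gives the claimed bound.

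No step is genuinely difficult: the entire argument is a structural generalization of the cases $t=3,4$ carried out in Theorems 2.4 and 2.6. The one point to verify explicitly is that 2-part partitions dominate the failure probability, which I handle via the monotonicity observation above. The main ``obstacle'' is really bookkeeping --- making sure the Bell-number multiplicities $B(t)-2$, $B(t)-1$ and the polynomial dependency term $O(n^{t-1})$ all disappear into the $1+o(1)$ correction, which they do because $\alpha(t),\beta(t)>1$.
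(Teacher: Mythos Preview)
Your proposal is correct and follows essentially the same approach as the paper: two deterministic seed rows (one for the all-singletons partition via a permutation of $[n]$, one all-ones row for the single-block partition), random fill over $[t-1]$ (respectively $[t]$), identification of the $2$-part partitions as the bottleneck, and application of $L^3$ with $\delta=O(n^{t-1})$. The only cosmetic differences are that you use the slightly sharper constants $B(t)-2$ and $B(t)-1$ in place of the paper's $B(t)$, and you spell out the monotonicity of $(t-1)_j$ in $j$ explicitly; neither affects the asymptotics.
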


\begin{proof}  We prove just the first part, since the proof of the second part is very similar.  Throughout we will use the notation of $L^3$.  First we lay down two rows, one consisting of all ones, and the second consisting of any permutation of $[n]$.  For the other $k$ rows, we let the entries be determined by $nk$ independent random variables, each uniformly distributed on $[t-1]$.  Let $X$ be the number of sets of $t$ columns, from among ${n\choose t}$, that are missing at least one partition of $[t]$ into $r$ parts; $2\le r\le t-1$.  We note that there are $S(t,r)$ partitions of $[t]$ into $r$ parts, but these Stirling numbers of the second kind are fixed as $n\to\infty$, and will prove to be asymptotically irrelevant.  The probability $p$ that any set of $t$ columns is ``deficient," i.e., missing at least one partition, is given by
\begin{eqnarray*}
p&=&P\lr\bigcup_{r=2}^{t-1}\bigcup_{j=1}^{S(t,r)}A_{r,j}\rr\\
&\le&B(t)P(B),
\end{eqnarray*}
where $A_{r,j}$ is the event that the array is missing the $j$th partition into $r$ parts; $B(t)$ are the Bell numbers that enumerate the number of partitions of a $t$-element set, and $B$ is that partition into between 2 and $t-1$ parts that is hardest to avoid using our probability model.  Now, if $B$ is a partition into $r$ parts, then it can appear in $(t-1)(t-2)\ldots(t-r)=(t-1)_r$ ordered ways and thus the probability that it can be avoided, namely 
\[\lr1-\frac{(t-1)_r}{(t-1)^{t}}\rr^k,\]
is maximized when $r=2$, i.e. when 
\[p=B(t)\lr\frac{{(t-1)^{t}-(t-1)(t-2)}}{(t-1)^t}\rr^k.\]
The conditions for $L^3$ are met when 
\[eB(t)\lr\frac{{(t-1)^{t}-(t-1)(t-2)}}{(t-1)^t}\rr^k An^{t-1}<1,\]
which, on simplifying, yields 
\[k\ge\frac{(t-1)\lg n}{\lg(\alpha(t))}(1+o(1)).\]
On incorporating the first two rows we get
\[k_0(n,t,n)\le\frac{(t-1)\lg n}{\lg(\alpha(d))}(1+o(1)),\]
as announced.\end{proof}
\section{Covering Arrays for Weight-Equivalent Words} This section will focus on covering arrays for words when words with the same weight are equivalent, and we only need to find a single word of a given weight in any set of $t$ columns.  \begin{dfn} An $k\times n$ array with entries from the alphabet $\{1,2,\ldots,d\}$ is a covering array for weight-equivalent words of length $t$ over $[d]$ if for each choice of $t$ columns a word of each weight between $t$ and $dt$ appears at least once across the rows of the selected columns.
\end{dfn}
Given $n,t,$ and $d$ what is the minimum number $k_0=k_0(n,t,d)$ of rows so that a $k\times n$ covering array exists for weight-equivalent words?  This is the key question that we will address in this section.  

In the case of regular covering arrays, the application that is often cited is that of being able to test software at all combinations of levels of each of $t$ factors out of $n$.  While we were not readily able to provide a similar application for the developments in Section 2, we can argue, in this section, that it is the {\it sum} of the levels of the factors that is relevant.  To give another example, if we are checking a circuit with $n$ ``on-off" switches, we will be satisfied (for every choice of $t$ switches) with checking any combination of $r$ ``on" switches$; 0\le r\le t$.

Since the techniques of proof are very similar to those in the previous section, we will skip computational details and jump right into a general bound.  As in Section 2, we will create a $k\times n$ matrix by filling the first row with all ones, yielding, for each choice of $t$ columns, a word of weight $t$.  Next, we put down a row of all $d$'s thus guaranteeing words of weight $dt$ in any set of $t$ columns.  The rest of the rows are filled at random, by letting each entry be independently and uniformly chosen to be an entry from $[d]$. Let $\alpha(d,t,w)$ be, for $t\le w\le td$, the number of solutions to the equations
\[x_1+\ldots+x_d=t;\]
\[x_1+2x_2+\ldots dx_d=w.\]
Such systems of equations are prevalent in the theory of random combinatorial structures; see, e.g. \cite{abt}; for example a permutation on $[w]$ with $t$ cycles ($x_j$ being the number of cycles of size $j$) would satisfy such a system.  The probability that a word of weight $w$ is absent in the random portion of the array is
\[\rho_w=\frac{d^t-\alpha(d,t,w)}{d^t},\]
and we set 
\[\rho=\max_{t+1\le w\le dt-1} \rho_w=\frac{d^t-t}{d^t}.\]  It is then easy to prove
\begin{thm}
\[k_0(n,t,d)\le \frac{(t-1)\lg n}{\lg\lr\frac{d^t}{d^t-t}\rr}(1+o(1)).\]
\end{thm}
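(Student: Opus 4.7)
The plan is to mirror the $L^3$-based proof of Theorem 2.7, with two deterministic rows handling the extreme weights and a random tail handling the interior weights. Begin by placing $(1,1,\ldots,1)$ in row 1 and $(d,d,\ldots,d)$ in row 2; for any choice of $t$ columns these rows contribute words of weight $t$ and of weight $dt$ respectively, leaving only the interior weights $w \in \{t+1, t+2, \ldots, dt-1\}$ to be realized across the random rows on each $t$-subset of columns. Fill the remaining $k-2$ entries of each column independently and uniformly at random from $[d]$.

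Next, set up $L^3$ exactly as in Lemma 2.2. For each $t$-subset $J$ of columns, let $I_J$ indicate the event that the random portion on $J$ misses at least one interior weight. By the definition of $\alpha(d,t,w)$, a single random row on $J$ has weight $w$ with probability $\alpha(d,t,w)/d^t$, so the probability that all $k-2$ random rows miss weight $w$ equals $\rho_w^{k-2}$. A union bound over the $dt-t-1$ interior weights gives
\[
p := \p(I_J = 1) \le (dt - t - 1)\,\rho^{k-2}, \qquad \rho = \frac{d^t - t}{d^t}.
\]
Since $I_J$ and $I_{J'}$ are independent whenever $J \cap J' = \emptyset$, the dependency bound becomes $\delta + 1 \le t{n-1 \choose t-1} = O(n^{t-1})$. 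Plugging these estimates into the condition $ep(\delta+1) \le 1$, taking $\log_2$, and absorbing both the two deterministic rows and the $L^3$ constants into the $(1+o(1))$ factor produces exactly the bound claimed in the theorem.

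The only step that requires genuine attention is verifying $\rho = \max_{t+1 \le w \le dt-1} \rho_w = (d^t - t)/d^t$, i.e., that $\min_w \alpha(d,t,w) = t$ on the interior range. At $w = t+1$ the system $x_1+\cdots+x_d = t$, $\sum j\,x_j = t+1$ forces $(x_1,\ldots,x_d) = (t-1,1,0,\ldots,0)$, with exactly $t!/((t-1)!\,1!) = t$ ordered words, so $\alpha(d,t,t+1) = t$; by the symmetry $w \mapsto (d+1)t - w$ induced by the substitution $x_j \leftrightarrow x_{d+1-j}$, the same value is attained at $w = dt-1$. For intermediate weights, the two linear constraints admit solutions with additional nonzero $x_j$'s, contributing either strictly larger multinomial coefficients or several solutions in aggregate, so $\alpha(d,t,w) > t$. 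This identification of the maximizing weight is the main obstacle; once it is in hand, the rest of the argument is a direct replay of the earlier $L^3$ calculations.
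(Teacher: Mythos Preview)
Your proposal is correct and follows exactly the approach the paper itself uses: two deterministic rows for the extreme weights, a uniformly random tail, and the $L^3$ with the dependence graph on $t$-subsets of columns. In fact you supply more detail than the paper does---the paper simply asserts $\rho=(d^t-t)/d^t$ and writes ``It is then easy to prove,'' whereas you spell out the union bound, the dependency count, and sketch why $\min_w\alpha(d,t,w)=t$ is attained at $w=t+1$ (and, by the $j\mapsto d+1-j$ symmetry, at $w=dt-1$).
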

\begin{cor}
\[k_0(n,3,2)\le2.95\lg n(1+o(1));\]
\[k_0(n,4,2)\le7.23\lg n(1+o(1));\],
\[k_0(n,3,3)\le 11.77\lg n(1+o(1)).\]
\end{cor}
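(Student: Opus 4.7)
The plan is that the corollary is obtained by straightforward substitution into the general bound of Theorem~3.2: one simply plugs each triple $(t,d)$ into $\frac{(t-1)\lg n}{\lg(d^t/(d^t-t))}$ and reports the numerical constant. No new probabilistic or combinatorial input is needed; all of the work was carried out in establishing Theorem~3.2 via the Lov\'asz local lemma applied to a uniformly random array seeded by a row of 1's and a row of $d$'s.

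Concretely, I would handle the three cases in turn. For $(t,d)=(3,2)$, the relevant quantities are $t-1=2$, $d^t=8$, $d^t-t=5$, giving the constant $2/\lg(8/5)$, which I would check numerically equals $2.95\ldots$. For $(t,d)=(4,2)$, I would substitute $t-1=3$, $d^t=16$, $d^t-t=12$, producing $3/\lg(4/3)\approx 7.23$. For $(t,d)=(3,3)$, I would use $t-1=2$, $d^t=27$, $d^t-t=24$, producing $2/\lg(9/8)\approx 11.77$. In each case a short numerical verification suffices, and the additive $O(1)$ contribution of the two seeded rows is absorbed into the $1+o(1)$ factor.

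Since this is a routine corollary, there is no genuine obstacle; the only thing one might want to double-check is that the maximizing weight $w$ in the definition $\rho=\max_w\rho_w=(d^t-t)/d^t$ behind Theorem~3.2 is indeed attained at a weight strictly between $t+1$ and $dt-1$ for each listed $(t,d)$, so that the application of the theorem is not degenerate; this is immediate because $\alpha(d,t,w)\ge t$ is achieved (e.g.\ by words with exactly one entry larger than $1$), while weights yielding fewer than $t$ representations do occur for all three listed parameter choices. As a side remark one could also ask, echoing Proposition~2.5 and Remark~4, whether equally-weighted columns or a hybrid alphabet would lower any of these three constants; but within the scope of this corollary the claimed bounds follow by pure substitution.
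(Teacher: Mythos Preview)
Your proposal is correct and matches the paper's treatment: the corollary is stated immediately after Theorem~3.1 with no proof, since it follows by direct substitution of $(t,d)=(3,2),(4,2),(3,3)$ into the bound $\frac{(t-1)\lg n}{\lg(d^t/(d^t-t))}$, exactly as you compute. (Minor note: the theorem you invoke is numbered 3.1 in the paper, not 3.2, and your side remark about the maximizing weight is slightly garbled---what you need is simply that the minimum of $\alpha(d,t,w)$ over $t+1\le w\le dt-1$ equals $t$, attained at $w=t+1$---but the substance is fine.)
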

We next investigate if the already impressive bound of $2.95\lg n$ can be improved in the important case of $t=3, d=2$ on using equally weighted columns.  Accordingly, we lay down a row of all zeros and another of all ones and then, with $k=2m$, we put $m$ zeros and $m$ ones in each column.  We seek to avoid each of 110, 101 and 011; or each of 001, 010 and 100. $p$, the probability that any set of three columns is deficient in this sense can be bounded by twice the probability that it is missing all of the words 110, 101, and 011.  Arguing as in Proposition 2.5, we see that
\[p\le 2\frac{{{2m\choose{m}}}\cdot\sum_{j\ge m/2}{m\choose j}^2{{j}\choose{2j-m}}}{{{2m}\choose{m}}^3};\]
in the above the two ${m\choose j}$ terms select the positions of (i) the $j$ ones in the second column corresponding to $m$ ones in the first column; and (ii) the $j$ second-column zeros corresponding to the zeros in the first column.  This only allows for the freedom to choose an additional $2j-m$ zeros in those column 3 positions having zeros in both columns 1 and 2.  Writing the summand above in its Stirling approximation format (ignoring linear terms and setting $j=Am$), we see that the critical component is
\[\lr\frac{1}{A^A(1-A)^{3(1-A)}(2A-1)^{2A-1}}\rr^m\]
which has maximum value $(5.73)^m$ when $A=0.637$. This leads, noting that ${{2m}\choose{m}}\sim 4^m$, to
\[m\ge \frac{2\lg n}{\lg (16/(5.73))},\]
and thus to the following slight improvement over Corollary 3.2:
\begin{thm}
\[k_0(n,3,2)=2m_0(n,3,2)\le 2.699\lg n(1+o(1)).\]
\end{thm}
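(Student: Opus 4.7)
The plan is to use the Lov\'asz local lemma with equally weighted random columns, exactly in the spirit of Proposition 2.5 and the analysis immediately preceding the theorem. After laying down the two canonical rows (all 1's and all 2's, providing weight-$t$ and weight-$dt$ words for every triple), I would fill the remaining $k=2m$ rows by choosing each column independently to be a uniformly random arrangement of $m$ 1's and $m$ 2's. This is exactly the probability model whose deficiency probability $p$ is bounded in the display preceding the theorem; what remains is to justify the counting behind that bound, to carry out the Stirling optimization, and to feed the resulting $p$ into $L^3$.

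First I would justify the count. Working in $\{0,1\}$ for notational clarity, symmetry under $0 \leftrightarrow 1$ gives $p \le 2\,\mathbb{P}(\text{missing all of 110, 101, 011})$. Fix the first column to have its $m$ ones in the top half. Parametrize the second column by $j$, the number of 1's placed in the top half; there are $\binom{m}{j}^2$ such configurations ($\binom{m}{j}$ for the top half and $\binom{m}{j}$ for the placement of the $m-j$ 1's in the bottom half). A position-by-position analysis then shows that avoiding 110 forces column 3 to carry a 1 at each $(1,1)$ position, avoiding 101 forces a 0 at each $(1,0)$ position, and avoiding 011 forces a 0 at each $(0,1)$ position. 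Only the $j$ positions where columns 1 and 2 are both 0 remain free, and the remaining $m-j$ 1's must be distributed among them, giving $\binom{j}{m-j}=\binom{j}{2j-m}$ admissible columns. This is nonzero precisely when $j \ge m/2$, matching the formula given.

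Next I would carry out the asymptotics. Writing $j=Am$ and invoking Stirling, each summand $\binom{m}{j}^2\binom{j}{2j-m}$ is, up to polynomial factors, $\phi(A)^m$ with
\[
\phi(A)=\frac{1}{A^{A}(1-A)^{3(1-A)}(2A-1)^{2A-1}}.
\]
Differentiating $\log \phi$ and setting the derivative to zero yields the equation $(1-A)^3=A(2A-1)^2$, whose relevant root is $A\approx 0.637$, giving $\phi(A)\approx 5.73$. Since $\binom{2m}{m}^2 \sim 16^m/(\pi m)$ and the sum has $O(m)$ terms, the union bound combined with the symmetry factor of 2 gives $p \le (5.73/16)^m(1+o(1))$ up to polynomial factors absorbed into the $1+o(1)$.

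Finally, I would apply $L^3$. Any triple of columns shares at least one column with at most $3\binom{n-1}{2}=O(n^2)$ others, so $\delta+1=O(n^2)$. The $L^3$ condition $ep(\delta+1)\le 1$ then reduces to $(16/5.73)^m\ge Cn^2$, which is met once $m\ge \frac{2\lg n}{\lg(16/5.73)}(1+o(1))$; adding the two canonical rows and using $2/\lg(16/5.73)\approx 1.35$ gives $k_0(n,3,2)\le 2m+2\le 2.699\lg n(1+o(1))$. The main obstacle is the combinatorial counting step: one must verify that simultaneously forbidding all three weight-2 patterns imposes \emph{exactly} the position-by-position constraints claimed, so that column 3 has only the freedom to place $m-j$ 1's among $j$ positions and no more; any oversight here would alter $\phi(A)$ and hence the final constant.
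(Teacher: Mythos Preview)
Your proposal is correct and follows essentially the same approach as the paper: two deterministic rows, equally weighted random columns, the same counting argument leading to $\sum_{j\ge m/2}\binom{m}{j}^2\binom{j}{2j-m}$, the same Stirling optimization yielding $\phi(A)$ with maximum $\approx 5.73$ at $A\approx 0.637$, and the same $L^3$ finish. Your write-up in fact supplies slightly more detail than the paper does (the position-by-position derivation of the constraints on column~3 and the explicit critical-point equation $(1-A)^3=A(2A-1)^2$), but the argument is the same one.
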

\section{Open Questions} 

\indent\indent (i) What are some exact values that one might find via constructions?

(ii) Why do fixed weight columns appear to do no better in some cases, but play a critical role in improvements in other cases? 

(iii) What are some applications of our schema, beyond those noted in the beginning of Section 3?  What other equivalence classes of words might we consider?

\end{document}